\newtheorem{theorem}{Theorem}
\newtheorem*{proposition}{Proposition}
\theoremstyle{definition}
\newtheorem*{remark}{Remark}
\newcommand{\B}{\mathbb{B}}
\newcommand{\C}{\mathbb{C}}
\newcommand{\Z}{\mathbb{Z}}
\newcommand{\R}{\mathbb{R}}
\newcommand{\PP}{\mathrm P}
\newcommand{\Grr}{{\mathrm{Gr}}_\R}
\newcommand{\Grc}{{\mathrm{Gr}}_\C}
\newcommand{\Gl}{{\mathrm{Gl}}}
\newcommand{\cA}{\mathcal{A}}
\newcommand{\cC}{\mathcal{C}}
\newcommand{\cM}{\mathcal{M}}
\newcommand\hra{\hookrightarrow}
\newcommand\Real{\mathrm{Re}}
\newcommand\grad{\mathrm{grad}}
\newcommand\dist{\mathrm{dist}}
\def\bs{\backslash}
\def\bar{\overline}
\numberwithin{equation}{section}
\begin{document}
\title[Modeling complex points up to isotopy]
{Modeling complex points up to isotopy}
\author{Marko Slapar}
\address{University of Ljubljana, Faculty of Education, Kardeljeva Plo\v s\v cad 16, 1000 Ljubljana, Slovenia}
\email{marko.slapar@pef.uni-lj.si}
\thanks{Supported by the research program P1-0291 at the Slovenian Research Agency.}

%
%
\subjclass[2000]{32V40, 32S20, 32F10}
\date{\today} 
\keywords{CR manifolds, complex points, q-complete neighborhoods}

\begin{abstract}
In this paper we examine the structure of complex points of real $4$-manifolds embedded into complex $3$-manifolds up to isotopy. We show that there are only two types of complex points up to isotopy and as a consequence, show that any such embedding can be deformed by isotopy to a manifold having $2$-complete neighborhood basis. 
\end{abstract}
\maketitle

\section{Introduction} 
Let $i\!:Y\hra X$ be a  real compact $2n-2$ dimensional manifold $Y$, smoothly embedded into an $n$ dimensional complex manifold $(X,J)$. If the embedding is sufficiently generic, all but finitely many points $p\in Y$ are CR regular and have the dimension of the maximal complex tangent subspace $T^\C_pY=T_pY\cap JT_pY$  equal to $n-2$. For dimension reason, at the finite set of CR singular points, the tangent space $T_pY=T^\C_pY$ and we call such points {\it complex points} of $Y$. If $Y$ is an oriented manifold, the orientation of $T_pY$ can be compared with the induced orientation of $T_pY$ as a complex subspace of $T_pX$. If these two orientations agree, the complex point is called {\it positive}, if not, {\it negative}. 

Using local coordinates on $X$, we can assume that locally $i\!:Y\hra \C^n$. Complex points are then exactly the points that are being mapped into $\Grc(n-1,\C^n)\subset \Grr(2n-2,\C^n)$ by the Gauss map $Di\!:p\mapsto T_pY\in \Grr(2n-2,\C^n)$. Using a version of Thom transversality theorem \cite{Thom}, these intersections  are transverse for generic embeddings. Depending on the sign of these intersections, we call complex points {\it elliptic} (positive sign) or {\it hyperbolic} (negative sign). Note that the terms elliptic and hyperbolic here are used differently than by Dolbeault, Tomassini and Zaitsev in \cite{DTZ1,DTZ2}. Using this terminology, we algebraically count complex points as $I=e-h$, where $e$ is the number of elliptic complex points and $h$ the number of hyperbolic complex points. If $Y$ is oriented, we can also introduce $I_\pm=e_\pm-h_\pm$, taking into account the sign of complex points. The indices $I,I_\pm$ are called Lai indices. It is evident from the construction that these indices are invariant under isotopies and there are topological index formulas, calculating these invariants, see \cite{Lai}. 





In local holomorphic coordinates, defined in a neighborhood $U$ of an isolated complex point $p\in Y$, the manifold $Y$ can we written as
\begin{equation}\label{form} w=\bar z^TAz+\Real (z^TBz)+o(|z|^2),\end{equation}
where $(z,w)$ are coordinates in $\C^n=\C^{n-1}\times\C$, $z=(z_1,z_2,\ldots,z_{n-1})$, and $A,B$ are $(n-1)\times(n-1)$ complex matrices. The above form can be easily derived from a general Taylor expansion, since we can do a holomorphic coordinate change $w\mapsto w-p(z)$ for any holomorphic polynomial in $z$. The matrix $B$ can be assumed to be symmetric. 
If we calculate the intersection index using local coordinates, see \cite{C1}, one can see that the point $p$ is elliptic, if the determinant of
\begin{equation}\label{det}\left[
\begin{array}{cc}
A & \bar B \\
B & \bar A
\end{array}\right]
\end{equation}
is positive and hyperbolic, if it is negative. We will call a pair  $(A,B)$ (or a complex point itself) {\it nondegenerate}, if the above determinant is not $0$. A complex point is called {\it quadratic}, if in some local coordinates, the term $o(|z|^2)$ vanishes; {\it flat}, if in some local coordinates near the complex point, the manifold $Y$ can be put in $\C^{n-1}\times\R$, and  {\it quadratically flat} if that is true up to the quadratic part of (\ref{form}).

The case $n=2$ is well understood. In some local holomorphic coordinates, complex points are given by the equation $w=z\bar z+\gamma(z^2+{\bar z}^2)+o(|z|^2)$ with $0\le\gamma\le\infty$, where $\gamma=\infty$ is understood as $w=z^2+{\bar z}^2+o(|z|^2)$. The parameter $\gamma$ is called the Bishop invariant  and the points are hyperbolic for $\gamma>\frac{1}{2}$ and elliptic for $0\le\gamma<\frac{1}{2}$, see \cite{Bis}. Complex point in dimension $2$ are always quadratically flat, which is not the case in higher dimensions. In dimension $2$ in the real analytic case, elliptic points are flat and hyperbolic points are formally flat for all but countably many $\gamma>1/2$, see \cite{MoW}. Flatness and more generally normal forms beyond quadratic terms are not well understood in higher dimensions. They have been studied for spherical models in \cite{HY} and \cite{Bur}. In dimension $n=2$, a pair of hyperbolic and elliptic complex point can be canceled by a $\cC^0$ small isotopy (as long as both points are of the same sign, if the surface is oriented) \cite{EH, F}, and the surface with only hyperbolic flat hyperbolic complex points has tubular Stein neighborhood basis \cite{S}. For most complex surfaces (for example $X$ Stein or $X$ of general type), any smoothly embedded real surface, except spheres with trivial homology class, can be deformed by a small smooth isotopy to only have  hyperbolic flat complex points and thus have Stein neighborhood basis. These results are derived using the Seiberg-Witten adjunction inequality \cite{FS,OS,LiM,N}. In the case of $n>2$ one cannot get Stein neighborhoods for purely topological reasons. The best one can expect are $2$ complete neighborhoods ($2$ strictly positive directions of the Levi form of a defining function) and we show below that after a smooth isotopy, this can indeed be done.  We will treat the cancellation theorem in higher dimensions in a subsequent paper. There is one more direction we would like to mention. In dimension $n=2$ there is a one dimensional family of Bishop discs shrinking toward an elliptic complex point, see \cite{Bis}. Such a family foliates a Levi flat hypersurface.  An analogous problem of finding a Levi flat hypersurface bounded by a codimension $2$ real submanifold in higher dimensions was studied in \cite{DTZ1,DTZ2}. 

In this paper, we prove the following two theorems

\begin{theorem}\label{thm1} Every smooth embedding of a $4$ dimensional compact real manifold $Y$ into a $3$ dimensional complex manifold $X$ can be deformed by a smooth isotopy to a manifold only having complex point of the following two types:
\begin{itemize}
\item[i)] $w=|z_1|^2+|z_2|^2$,
\item[ii)] $w=|z_1|^2+\overline{z_2}^2$.
\end{itemize}
The isotopy is $\cC^0$ close to the original embedding.
\end{theorem}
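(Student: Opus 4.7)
The argument I would run splits into a genericity step followed by three local normalization stages.

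\emph{Stage 1 (genericity).} By Thom transversality applied to the Gauss map $Di\!:p\mapsto T_pY$ relative to the complex Grassmannian sub-stratum $\Grc(2,TX)\subset\Grr(4,TX)$, an arbitrarily $\cC^\infty$-small perturbation of $i$ ensures that all complex points are isolated and nondegenerate: the determinant in (\ref{det}) is nonzero at each. I would work from here with such an embedding.

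\emph{Stage 2 (killing higher order terms).} At each nondegenerate complex point $p$ I pick local holomorphic coordinates so that $Y$ is the graph $w=q(z)+r(z)$ with $q(z)=\bar z^TAz+\Real(z^TBz)$ the quadratic part and $r(z)=o(|z|^2)$. I would then construct a $\cC^0$-small isotopy, supported in a small ball $B_\epsilon$ about $p$, by interpolating the graph function
\[
f_t(z)=q(z)+(1-t\chi(|z|/\epsilon))r(z),\qquad t\in[0,1],
\]
where $\chi$ is a standard cutoff equal to $1$ near the origin and $0$ outside $B_\epsilon$. Because $|r(z)|=o(\epsilon^2)$ on $B_\epsilon$, the isotopy is $\cC^0$-small, and for $\epsilon$ small enough the nondegeneracy of $q$ prevents the appearance of new complex points while preserving embeddedness; the germ at $p$ becomes purely quadratic.

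\emph{Stage 3 (normal form for the quadratic germ).} For a nondegenerate quadratic germ $w=q(z)$ I would classify the pair $(A,B)$ up to the action of $\Gl(2,\C)$ via $(A,B)\mapsto(P^*AP,P^TBP)$, positive rescaling $w\mapsto cw$, and holomorphic changes $w\mapsto w-p(z)$, verifying that the moduli of nondegenerate pairs has exactly two path-components, distinguished by the sign of (\ref{det}), and that each component contains a representative yielding one of the two listed models. A path in the orbit space from $(A,B)$ to the model representative integrates via cutoff into a $\cC^0$-small local isotopy bringing the germ into the desired form. Since the finitely many complex points admit pairwise disjoint neighborhoods, these local isotopies patch to a global smooth $\cC^0$-small isotopy, realizing the two normal forms at every complex point of the resulting manifold.

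The principal technical obstacle is the control required in Stage 2: one must arrange that $f_t$ remains an embedding with a single nondegenerate complex point at $p$ throughout the interpolation, which requires estimating $|\partial r|$ against $|\partial q|$ on the annulus $B_\epsilon\setminus B_{\epsilon/2}$ and exploiting the nondegeneracy of $q$ to rule out spurious intersections with $\Grc$. Stage 3 is essentially algebraic --- a direct analysis of $\Gl(2,\C)$-orbits on pairs of $2\times 2$ complex matrices in which the sign of (\ref{det}) is the only invariant that survives the group action.
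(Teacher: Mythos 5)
Your three-stage outline matches the paper's overall strategy closely: generic perturbation, removal of the $o(|z|^2)$ tail by a cutoff, and a connectivity statement for nondegenerate quadratic pairs. Stages 1 and 2 are essentially the paper's first two steps, and the connectivity claim in Stage 3 (that the nondegenerate pairs fall into exactly two path-components distinguished by the sign of (\ref{det})) is precisely the paper's Proposition \ref{connected}, which is proven there by a lengthy explicit case analysis through the Coffman/Horn--Sergeichuk normal forms; you assert it without that case-work, which is acceptable in a sketch.

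The genuine gap is in your sentence ``A path in the orbit space from $(A,B)$ to the model representative integrates via cutoff into a $\cC^0$-small local isotopy.'' You identify the technical obstacle as living in Stage 2, but the hard estimate is here in Stage 3. If you naively substitute a cutoff or a linear radial variable $s=|z|/\varepsilon$ into the homotopy $(A(s),B(s))$ and graph
\[
w=\bar z^T A(s) z+\Real\bigl(z^T B(s) z\bigr),
\]
then the condition for a complex point $\partial_{\bar z}f=0$ acquires, via the chain rule, a correction term proportional to $\bigl(\tfrac{ds}{d|z|}\cdot|z|\bigr)\bigl(\bar{z'}^T A'(s)z'+\Real(z'^T B'(s)z')\bigr)$, and for a standard cutoff $\tfrac{ds}{d|z|}\cdot|z|$ is of order one on an annulus. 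If $\|A'\|,\|B'\|$ are large (and they can be, since the homotopy is forced on you by the classification), this term can overwhelm the uniform lower bound $\|A(s)z'+\bar B(s)\overline{z'}\|\ge\delta>0$ coming from nondegeneracy, and spurious complex points may appear. The paper's device is to take $s=\sqrt[n]{|z|/\varepsilon}$: then $\tfrac{ds}{d|z|}\cdot|z|=\tfrac{s}{n}\le\tfrac1n$, so after choosing $n$ larger than $m/(2\delta)$, where $m$ bounds the derivative contribution on the compact set $\{|z'|=1\}\times[0,1]$, the correction is dominated and the only complex point in the ball is the model one at the origin. Without that reparameterization (or an equivalent choice of interpolation profile with $\sup_r\sigma'(r)r$ small), your ``integrates via cutoff'' step does not go through. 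Separately, your description of the group action omits the $S^1$ phase factor $(A,B)\mapsto(\zeta A,\bar\zeta B)$ arising from $w\mapsto c\,w$ with $|c|=1$; including it is what makes the case analysis behind the two-component claim tractable, though this is a more minor point.
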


\begin{remark} In the above theorem elliptic complex points are deformed to be of type i) and hyperbolic to be of type ii).  We can take other suitable model situations for complex points. For elliptic points, we can for example take $w=|z_1|^2-|z_2|^2$ or $w=\overline{z_1}^2+\overline{z_2}^2$.\end{remark}

 \begin{theorem}\label{thm2} Let $Y\hookrightarrow X$ be a smooth embedding of a compact $4$-manifold in a complex $3$ manifold. After a $\cC^0$ small smooth isotopy, $Y$ has a $2$-complete tubular neighborhood basis.  
\end{theorem}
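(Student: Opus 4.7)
The plan is to use Theorem~\ref{thm1} as a black box: after an initial $\cC^0$-small isotopy we may assume every complex point of $Y$ is biholomorphically equivalent to model~(i) or model~(ii), and then it suffices to construct a $2$-complete tubular neighborhood basis for this modified $Y$. For this I would produce a smooth function $\rho\geq 0$ on an open neighborhood $U\subset X$ of $Y$ with $\rho^{-1}(0)=Y$ whose Levi form has at least two positive eigenvalues at every point of $U$. Given such a $\rho$, the sublevel sets $U_\epsilon=\{\rho<\epsilon\}$ for small $\epsilon>0$ are a basis of tubular neighborhoods of $Y$, and $\phi_\epsilon=-\log(\epsilon-\rho)$ is a smooth exhaustion of $U_\epsilon$ satisfying
\begin{equation*}
\di\dibar\phi_\epsilon=\frac{1}{\epsilon-\rho}\,\di\dibar\rho+\frac{1}{(\epsilon-\rho)^2}\,\di\rho\wedge\dibar\rho,
\end{equation*}
where the second term is positive semidefinite; hence $\phi_\epsilon$ inherits the two positive Levi eigenvalues of $\rho$, exhibiting $U_\epsilon$ as $2$-complete.

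I would construct $\rho$ locally and then glue. At points of the CR-regular part, $Y$ is a generic CR submanifold of real codimension $2$ with CR-dimension $1$; the totally real direction inside $TY$ and the complex normal direction together supply the two positive Levi eigenvalues of the squared distance $\rho_{\mathrm{reg}}=\dist^2(\cdot,Y)$ in any background Hermitian metric, by a standard tube computation. Near a complex point of model~(i), $Y$ is locally $\{g=0\}$ with $g=w-|z_1|^2-|z_2|^2$. The natural candidate $|g|^2$ vanishes on $Y$, but a direct calculation shows that its Levi form has rank only one at the origin, since $\di g(0)=dw$ and $\di\bar g(0)=0$; the analogous degeneracy occurs at model~(ii). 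To compensate, I would add a polynomial correction term vanishing on the local $Y$ whose Levi form contributes the missing positive directions at the origin, while keeping the total function nonnegative on a whole neighborhood. The explicit quadratic control afforded by the normal forms (i) and (ii) is what makes such a correction feasible.

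Finally, the local pieces are patched with a partition of unity subordinate to a locally finite cover by charts of the above two kinds, each piece rescaled by a large constant so that the second derivatives of the cut-off functions are dominated by the local Levi positivity. The main obstacle lies in the local construction at the two complex-point models: at such a point the entire tangent space of $Y$ is complex, so any distance-type function necessarily degenerates in complex-tangential directions, and the correction must simultaneously vanish on $Y$, preserve nonnegativity, and restore two positive Levi eigenvalues. Once this local problem is solved, the gluing and the passage to the exhaustion $\phi_\epsilon$ are routine.
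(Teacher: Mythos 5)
Your overall architecture matches the paper's: apply Theorem~\ref{thm1}, build a nonnegative function $\rho$ vanishing exactly on $Y$, pass to the exhaustion $-\log(\epsilon-\rho)$, get the CR-regular part from the standard tube computation (the paper cites Chirka, Prop.~6.5), and glue with a partition of unity. The substance of the argument, however, is precisely the local construction at the two model complex points, and that is the one step you leave unresolved. The paper's choice there is not an additive polynomial correction to $|g|^2$ but a \emph{multiplicative} one: it takes $f=(1+|z|^2)\,|w-\bar z_1^{\,2}-\bar z_2^{\,2}|^2$ at an elliptic point (choosing the alternative elliptic model from the Remark) and $f=(1+|z_2|^2)\,\bigl|w-|z_1|^2-\bar z_2^{\,2}\bigr|^2$ at a hyperbolic point, and verifies by an explicit Levi-form computation that $f$ has at least two strictly positive Levi eigenvalues on a punctured neighborhood of the origin. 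So there is a genuine gap: the heart of the proof is missing, and what you propose instead is only a plan.

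There is also a conceptual problem with the goal you set for that local piece. You write that the correction ``must simultaneously vanish on $Y$, preserve nonnegativity, and restore two positive Levi eigenvalues'' at the complex point itself. That is impossible. At a complex point $p$ the full tangent space $T_pY$ is a complex $2$-plane, and any $\rho\ge 0$ with $\rho^{-1}(0)=Y$ vanishing to second order along $Y$ has its real Hessian at $p$ annihilating $T_pY$; hence the Hermitian part (the Levi form) restricted to $T_pY$ is zero, and $L\rho(p)$ has rank at most one. Indeed the paper's $f$ above has Levi form $\mathrm{diag}(0,0,1)$ at the origin, and the proposition is carefully phrased as ``strictly $2$-convex \emph{except at complex points}.'' The degeneracy at the finitely many complex points is therefore unavoidable at the level of the defining function $\rho$, and has to be absorbed afterward at the level of the exhaustion function --- e.g.\ by adding to $-\log(\epsilon-\rho)$ a small, compactly supported, strictly plurisubharmonic bump near each complex point, which restores two positive Levi eigenvalues there without spoiling $2$-convexity on the transition annuli (by compactness and smallness of the bump). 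Your plan, as written, would stall at the local step both because the explicit model functions are not produced and because the target you set for them cannot be met.
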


In the above theorem, we call a neighborhood tubular, if it diffeomorphic to the normal bundle of $Y$. We expect that analogous statements also hold in higher dimensions.

\section{Normal forms of quadratic complex points of real $4$-manifolds in $\C^3$.}

Local normal forms of $4$-manifolds in $\C^3$ have been studied and are listed by Coffman in \cite{C1}. We review part of the material here, both for completeness and because we would like to know which  normal forms arise generically. 

Let $p$ be a complex point of a real $4$-manifold $Y$ in a complex $3$-manifold given  in some local coordinates by  (\ref{form}), $(z,w)\in \C^2\times\C$, $z=(z_1,z_2)$, and $A,B$ some $2\times 2$ matrices with $B^T=B$.  Any local holomorphic change of coordinates preserving both the complex point and the tangent space has the same effect on the quadratic part of the equation (\ref{form}) as the linear change 
$$\begin{pmatrix}
z \\ 
w 
\end{pmatrix}
=
\begin{pmatrix}
\PP & *\\
0 & c_{n,n}
\end{pmatrix}
\begin{pmatrix}
\tilde z\\
\tilde w
\end{pmatrix}.
$$

\noindent This linear change transforms the equation into 

$$w=\frac{1}{c_{nn}}\bar z^TP^*APz+\frac{1}{2}\left(\frac{1}{c_{nn}}z^TP^TBPz+\frac{1}{c_{nn}}{\bar z}^T{\bar P}^T\bar B\bar P\bar z\right)+o(|z|^2),$$

\noindent where we have dropped the $\sim$ in the new coordinates. After a quadratic holomorphic change in $w$, the equation becomes

$$w=\frac{1}{c_{nn}}\bar z^TP^*APz+\frac{1}{\overline{c_{nn}}}\Real (z^TP^TBPz)+o(|z|^2).$$

We introduce the group action of $S^1\times \Gl(2,\C)/\Z_2$ (the $\Z_2$ quotient means $P\sim-P$), on the space of pairs of matrices $M=\{(A,B);\ A,B\in M_2(\C),\ B=B^T\}$ by
$$(\zeta,P)(A,B)=(\zeta P^*AP,\bar \zeta P^TAP)$$ and let $\cM$ be the quotient of $M$ by this action. We have seen that the space $\cM$ is the moduli space of quadratic complex points up to biholomorphic change of coordinates. We call two pairs $(A_0,B_0),\ (A_1, B_1)\in M$ {\it h-congruent} if they are in the same orbit of this group action.  Let us denote by $M^+$ the part of $M$ where the determinant of (\ref{det}) is positive and by $M^-$ the part of $M$ where it is negative. Since the sign of the determinant is preserved by the group action, we can also denote by $\cM^+$ and $\cM^-$ the quotients of $M^\pm$ by the group action. These are the moduli spaces of elliptic and hyperbolic quadratic complex points.

To obtain a normal form for quadratic complex points under biholomorphic coordinate change, we need to look at canonical forms for h-congruence. The first step is to find canonical forms for *congruence. This is done in \cite{HS} for $n\times n$ complex matrices and we do here a quick review in the case of $n=2$. Let $A$ be a $2\times 2$ complex matrix and we assume that $A$ is non singular (this is a generic assumption for complex points). Then the matrix $\cA=A^{-*}A$ (called the *cosquare of $A$) has the property $A\cA A^{-1}=\cA^{-*}$. Since $\cA$ is similar to $\cA^{-*}$,  we have only the following options for the Jordan form of $\cA$:
$$
 \bullet\begin{pmatrix} e^{i\alpha} &0\\0&e^{i\beta}\end{pmatrix}\quad\bullet \begin{pmatrix} e^{i\alpha} &1\\0&e^{i\alpha}\end{pmatrix}\quad\bullet
 \begin{pmatrix} \mu&0\\0& 1/{\bar \mu}\end{pmatrix},\ 0<|\mu|<1.
$$
It the first case, the matrix $A$ turns out to be *congruent to one of the matrices $\pm\bigl(\begin{smallmatrix} e^{i\alpha/2} &0\\0&\pm e^{i\beta/2}\end{smallmatrix}\bigr)$, in the second case it is *congruent to $\pm {e^{i\alpha/2}}\bigl(\begin{smallmatrix} 0 &1\\1&i\end{smallmatrix}\bigr)$ and in the third case, $A$ is *congruent to $\bigl(\begin{smallmatrix} 0 &1\\\mu &0\end{smallmatrix}\bigr)$. For h-congruence, we also have an option of multiplying the matrix $A$ by a nonzero complex number, so we can further simplify the form of $A$ to one of the following forms  
\begin{equation}\label{123}
{\mathrm {i)}}\ \begin{pmatrix} 1 &0\\0&e^{i\theta}\end{pmatrix}, 0\le\theta\le \pi\quad 
{\mathrm {ii)}}\ \begin{pmatrix} 0 &1\\1&i\end{pmatrix}\quad 
{\mathrm {iii)}}\ \begin{pmatrix} 0 &1\\ \mu&0\end{pmatrix},\ 0<\mu<1. 
\end{equation}
One gets $0\le\theta\le\pi$ in $\mathrm{i)}$ since diagonal elements can be interchanged by the congruence with the matrix $\bigl(\begin{smallmatrix}0&1\\1&0\end{smallmatrix}\bigr)$. The parameter $\mu$ in $\mathrm{ii)}$ can be made real by first doing a *congruence using the matrix $\bigl(\begin{smallmatrix}1&0\\0&x\end{smallmatrix}\bigr)$ for a suitable $x$, followed by the division by $x$. The details, using a slightly different construction, can also be found in (\cite{C2}, Theorem 4.3). 

If the matrix $\cA$ has only one eigenvalue $e^{i\alpha}$, we get cases $\theta=0$ or $\theta=\pi$ from $\mathrm{i)}$, as long as the eigenspace is two dimensional. If it is one dimensional, we get the case $\mathrm{ii)}$. If the matrix $\cA$ has two distinct eigenvalues, we either get $0<\theta<\pi$ in $\mathrm{i)}$ or the case $\mathrm{iii)}$. Let as see that in a  generic situation, $\cA$ has two distinct eigenvalues. For some $A=\bigl(\begin{smallmatrix}a&b\\c&d\end{smallmatrix}\bigr)$ we have
$$\cA=A^{-*}A=\frac{1}{\overline{ad-bc}}\begin{pmatrix}a\bar d-|c|^2&b\bar d-\bar c d\\\bar a c-a\bar b&\bar a d-|b|^2\end{pmatrix}$$
and  it's characteristic polynomial is
$$p_{\cA}(\lambda)=\lambda^2-\frac{a\bar d+\bar a d-|c|^2-|b|^2}{\overline{ad-bc}}\lambda+\frac{ad-bc}{\overline{ad-bc}}.$$
If the characteristic polynomial has two distinct zeros $\lambda_1,\lambda_2$ with $|\lambda_1|=|\lambda_2|=1$, then $|\lambda_1+\lambda_2|=|a\bar d+\bar a d-|c|^2-|b|^2|/|{ad-bc}|<2$; if the two distinct zeros are $\lambda_1=\mu$, $\lambda_2=\frac{1}{\bar\mu}$,
 then $|\lambda_1+\lambda_2|=|a\bar d+\bar a d-|c|^2-|b|^2|/|{ad-bc}|>2$. So outside a codimension $1$ real subvariety $|a\bar d+\bar a d-|c|^2-|b|^2|=2|ad-bc|$, the matrix $A$ is *congruent to either $\mathrm{i)}$ with $\theta\ne0,\pi$ or $\mathrm{iii)}$. One could also see that the case $\mathrm{ii)}$ account for most of the matrices with $|a\bar d+\bar a d-|c|^2-|b|^2|/|{ad-bc}|=2$, while the cases $\ \theta=0,\pi$ from $\mathrm{i)}$ are of further real codimension $2$.
 
One can check that the only elements $(\zeta,P)$ of the group $S^1\times \Gl(2,\C)/\Z_2$, so that $\zeta P^*\bigl(\begin{smallmatrix}1&0\\0&e^{i\theta}\end{smallmatrix}\bigr)P=\bigl(\begin{smallmatrix}1&0\\0&e^{i\theta}\end{smallmatrix}\bigr)$, $0<\theta<\pi$, are $\left(1,\bigl(\begin{smallmatrix}e^{i\alpha}&0\\0&e^{i\beta}\end{smallmatrix}\bigr)\right)$. We can use this group elements to simplify the second matrix $B$ in the pair $(A,B)$ 
so that the diagonal elements of $B$ become nonnegative. If we assume the generic situation where both diagonal elements of $B$ are nonzero, these are essentially the only simplifications. Similarly, the only elements $(\zeta,P)$ that preserve $\bigl(\begin{smallmatrix}0&1\\ \mu&0\end{smallmatrix}\bigr)$ for $0<\mu<1$  are $\left(1,\bigl(\begin{smallmatrix}a&0\\0&1/\bar a\end{smallmatrix}\bigr)\right)$ for some nonzero complex $a$. 
If the diagonal elements of $B$ are both nonzero, these group elements can be used to make the diagonal element of $B$ conjugate to each other, and in this case, these are again the only simplifications. So in a generic situation, we can assume that for any complex point the pair $(A,B)$ in (\ref{form}) is h-congruent to one of the following 
\begin{itemize}
\item $A=\begin{pmatrix}1&0\\0&e^{i\theta}\end{pmatrix},\ 0<\theta<\pi,\ B=\begin{pmatrix}a&b\\b&d\end{pmatrix},\ a,d>0$
\item $A=\begin{pmatrix}0&1\\\mu&0\end{pmatrix},\ 0<\mu<1,\ B=\begin{pmatrix}a&b\\b&\bar a\end{pmatrix},\ a\ne 0$.  
\end{itemize}

If $\theta=0,\pi$ in $\mathrm{i)}$ or in the case $\mathrm{ii)}$, the subgroup of $S^1\times \Gl(2,\C)/\Z_2$ preserving $A$ under *congruence is larger. We state the possible forms of the matrix $B$ for some of those cases in the next section.

\section{Proof of main theorems}
\begin{proposition} $M^\pm$ and $\cM^\pm$ are connected.\label{connected}\end{proposition}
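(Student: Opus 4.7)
The plan is to reduce to showing connectedness of the quotients $\cM^{\pm}$ and then to use the normal form analysis at the end of Section 2.

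For the reduction, a direct calculation shows that under h-congruence $(A,B)\mapsto(\zeta P^{*}AP,\bar\zeta P^{T}BP)$, the block matrix in \eqref{det} becomes $\mathrm{diag}(\zeta P^{*},\bar\zeta P^{T})\cdot\bigl(\begin{smallmatrix}A&\bar B\\B&\bar A\end{smallmatrix}\bigr)\cdot\mathrm{diag}(P,\bar P)$, so its determinant is multiplied by $|\zeta|^{4}|\det P|^{4}=|\det P|^{4}>0$. Hence $M^{\pm}$ is $G$-invariant, with $G=S^{1}\times\Gl(2,\C)/\Z_{2}$, which is connected as a quotient of a product of connected groups. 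Its orbits in $M^{\pm}$ are therefore connected, and since the quotient map $M^{\pm}\to\cM^{\pm}$ is open with connected fibers, connectedness of $M^{\pm}$ is equivalent to that of $\cM^{\pm}$.

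For the main step, I would use the normal forms at the end of Section 2: every element of the open dense generic stratum of $\cM$ is represented by a pair $(A,B)$ in one of two parameter families---family (I) with $A=\bigl(\begin{smallmatrix}1&0\\0&e^{i\theta}\end{smallmatrix}\bigr)$, $\theta\in(0,\pi)$ and $B=\bigl(\begin{smallmatrix}a&b\\b&d\end{smallmatrix}\bigr)$, $a,d>0$, $b\in\C$; or family (III) with $A=\bigl(\begin{smallmatrix}0&1\\\mu&0\end{smallmatrix}\bigr)$, $\mu\in(0,1)$, and $B=\bigl(\begin{smallmatrix}a&b\\b&\bar a\end{smallmatrix}\bigr)$, $a\in\C^{*}$, $b\in\C$. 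Each parameter space is a connected smooth real five-manifold. Using the Schur complement expansion $D=\det A\cdot\det(\bar A-BA^{-1}\bar B)$, one computes $D$ as an explicit real polynomial in the parameters (for family (I), $D=(1-a^{2})(1-d^{2})+|b|^{4}-2|b|^{2}\cos\theta-2ad\,\Real(b^{2})$), and then verifies by direct path construction that $\{D>0\}$ and $\{D<0\}$ are each path connected within the family. The two generic families are joined through the degenerate boundary strata where the $*$cosquare $\cA=A^{-*}A$ has a coincident eigenvalue (corresponding to $\theta\in\{0,\pi\}$ in (I) and to the Jordan block form (ii) of \eqref{123}), which lie in the closures of both generic pieces.

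The main technical obstacle is the verification that $\{D>0\}$ and $\{D<0\}$ are each path connected inside the five-dimensional parameter family. The concern is that the real codimension-one zero locus $\{D=0\}$ could in principle partition a given sign-stratum into several path components. The key fact that resolves this is that each parameter space has sufficiently high real dimension relative to the codimension-one zero locus to permit routing paths around it, using the additional freedom in $b$ (resp.\ in $a$ in family (III)) to avoid the loci where simpler one-parameter paths become tangent to $\{D=0\}$; the verification is a direct manipulation of the explicit polynomial expression for $D$.
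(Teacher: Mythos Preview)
Your outline is close in spirit to the paper's proof---both start from the two generic normal-form families and try to connect arbitrary pairs to fixed model pairs by explicit paths---but the proposal has a real gap at the step you yourself flag as the ``main technical obstacle''.

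The heuristic you offer, that the codimension-one zero locus $\{D=0\}$ can be avoided because the ambient parameter space is five-dimensional, is not a valid argument: a real hypersurface can separate a manifold into several components in any dimension (think of $\{x_1^2>1\}\subset\R^5$). In your family (I), for instance, the slice $b=0$ already gives $D=(1-a^2)(1-d^2)$, so the region $\{D>0\}$ meets that slice in two disjoint pieces ($a,d<1$ versus $a,d>1$); showing these pieces are joined once the full $b$ and $\theta$ freedom is used is exactly the nontrivial content, and your proposal does not supply it. Likewise, the assertion that families (I) and (III) ``are joined through the degenerate boundary strata'' while remaining inside $M^{\pm}$ needs an explicit path, not a closure argument: the boundary strata lie in the closure of both families, but one must check that the sign of $D$ is preserved along an actual path crossing between them.

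The paper's proof shows that this verification is genuinely laborious. It does \emph{not} stay within the open families: for type (I) it pushes $\theta$ to the boundary values $0$ or $\pi$ (depending on the sign of $D$) and then invokes Coffman's finer classification at those degenerate values (\cite{C2}, Theorem~7.2), which produces a further case list (five subcases when $\theta=\pi$) each handled by an ad hoc homotopy. Type (II) is treated by separate explicit homotopies in the phases and moduli of $a,b$, with a case split according to the roots of a quadratic in $|b|^2$. Finally, the two elliptic model pairs reached from the different branches are connected by yet another explicit homotopy whose determinant is computed to stay positive. None of this is captured by a dimension count; if you intend to carry out the ``direct path construction'', you should expect roughly this level of casework, and you will likely need the Coffman classification (or an equivalent normal-form analysis at $\theta\in\{0,\pi\}$) as input.

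Your reduction step (connectedness of $M^{\pm}$ is equivalent to that of $\cM^{\pm}$ via the connected group $G$) is correct and is also implicit in the paper's argument, which freely passes between a pair and an h-congruent one along a path in the orbit.
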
  
\begin{proof}
Let $(A,B)\in M$. We have seen in the previous section that, perhaps after a small homotopy, we can assume that the pair $(A,B)$ is h-congruent to one of the two types of pairs

\begin{itemize}
\item[i)] $A=\begin{pmatrix} 1&0\\0&e^{i\phi}\end{pmatrix},\ 0<\phi<\pi$, $B=\begin{pmatrix}a&b\\b&d\end{pmatrix},\ a,b\ge 0,\ b\in \C,$
\item[ii)] $A=\begin{pmatrix} 0&1\\ \tau&0\end{pmatrix},\ 0<\tau<1$, $B=\begin{pmatrix}a&b\\b&\bar a\end{pmatrix},\ a,b\in \C.$ 
\end{itemize}

Notice first that the group $S^1\times\Gl(2,\C)/\Z_2$ is connected, so we can always connect a pair $(A,B)$ to its h-congruent pair by a homotopy.  

Let us assume first that $(A,B)$ is of type i). We have

\begin{equation}\label{det1}D=\det\begin{pmatrix} 1&0&a&\bar b\\
                     0&e^{i\phi}&\bar b&d\\
										 a&b&1&0\\
										 b&d&0&e^{-i\phi}
				\end{pmatrix}=|b|^4-ad(b^2+{\bar b}^2)-2|b|^2\cos\phi+(1-a^2)(1-d^2).\end{equation}

\noindent If $D<0$, we can deform the pair $(A,B)$ by a homotopy $A_t=\bigl(\begin{smallmatrix} 1&0\\ 0&e^{(1-t)i\phi}\end{smallmatrix}\bigr)$, $B_t=B$ to a pair			
with $A=\bigl(\begin{smallmatrix}1&0\\0&1\end{smallmatrix}\bigr)$. Using the classification list (\cite{C2}, Theorem 7.2), such a pair can alway be deformed by a homotopy to an h-congruent pair 
$$A=\begin{pmatrix}1&0\\0&1\end{pmatrix},\ B=\begin{pmatrix}a&0\\0&d\end{pmatrix},\ d>1>a\ge 0.$$
We then first deform the matrix $B$ by a linear homotopy in the parameter $a$, bringing $a$ to $0$. The pair 
$A=\bigl(\begin{smallmatrix}1&0\\0&1\end{smallmatrix}\bigr)$, $B=\bigl(\begin{smallmatrix}0&0\\0&d\end{smallmatrix}\bigr)$ is h-congruent to  
$A=\bigl(\begin{smallmatrix}1&0\\0&1/d\end{smallmatrix}\bigr)$, $B=\bigl(\begin{smallmatrix}0&0\\0&1\end{smallmatrix}\bigr)$ by the element $\left(1,\bigl(\begin{smallmatrix}1&0\\0&1/\sqrt d\end{smallmatrix}\bigr)\right)$. 
The last step is to do the homotopy $A_t=\bigl(\begin{smallmatrix}1&0\\0&(1-t)/d\end{smallmatrix}\bigr)$, $B_t=B$ to get the pair
\begin{equation}\label{hyp1}A=\begin{pmatrix}1&0\\0&0\end{pmatrix},\ B=\begin{pmatrix}0&0\\0&1\end{pmatrix}.\end{equation} 
It is easy to check that all homotopies stay in $M^-$.
If $D>0$, doing a homotopy $A_t=\bigl(\begin{smallmatrix} 1&0\\ 0&e^{(1-t)\phi+t\pi}\end{smallmatrix}\bigr)$, $B_t=B$ keeps the determinant (\ref{det1}) positive and brings the pair $(A,B)$
to a pair with $A=\bigl(\begin{smallmatrix}1&0\\0&-1\end{smallmatrix}\bigr)$. Using the classification list (\cite{C2}, Theorem 7.2), such a pair is h-congruent to a pair with the same $A$, and $B$  from the list
\begin{itemize}
\item[a)] $B=\begin{pmatrix}a&0\\0&d\end{pmatrix}$, with $0\le a\le d<1$ or $1<a\le d$,
\item[b)] $B=\begin{pmatrix}1+d&1-d\\1-d&1+d\end{pmatrix}$, $\mathrm{Im}\ d>0$,
\item[c)] $B=\begin{pmatrix}0&b\\b&0\end{pmatrix}$, $b> 0$,
\item[d)] $B=\begin{pmatrix}1+b&-1\\-1&1-b\end{pmatrix}$, $b> 0$, $b\ne 1$,
\item[e)] $B=\begin{pmatrix}1&1\\1&1\end{pmatrix}$. 
\end{itemize}

\noindent We can either argue off cases c),d) and e) for not being generic, or we can just do simple homotopies. We do a small homotopy $A_t=A,\ B_t=B+t\epsilon \bigl(\begin{smallmatrix}i&-i\\-i&i\end{smallmatrix}\bigr)$ to bring the case e) to b). The case c) is deformed by the homotopy $A_t=A,\ B_t=(1-t)B$ that stays in $M^+$ to the pair 
\begin{equation}\label{eli1} A=\begin{pmatrix}1&0\\0&-1\end{pmatrix},\ B=\begin{pmatrix}0&0\\0&0\end{pmatrix}.\end{equation}
In the case d), if $b<1$, we first do a linear homotopy in the parameter $b$, bringing $b$ to $0$,  to get 
$B=\bigl(\begin{smallmatrix}1&-1\\-1&1\end{smallmatrix}\bigr)$. 
The homotopy stays in $M^+$. The pair $(A,B)$ is h-congruent by 
$\left(-1,\bigl(\begin{smallmatrix}1&1+i\\1-i&1\end{smallmatrix}\bigr)\right)$ to the case e). If $b>1$, the pair $(A,B)$ is h-congruent to the pair $A=\bigl(\begin{smallmatrix}\frac{1}{1+b}&0\\0&\frac{-1}{1+b}\end{smallmatrix}\bigr)$,  $B=\bigl(\begin{smallmatrix}1&\frac{-1}{1+b}\\\frac{-1}{1+b}&-1+\frac{2}{1+b}\end{smallmatrix}\bigr)$. Next we use the homotopy $A_t=(1-t)A$,  $B_t=\bigl(\begin{smallmatrix}1&\frac{t-1}{1+b}\\ \frac{t-1}{1+b}&-1+\frac{2-2t}{1+b}\end{smallmatrix}\bigr)$ to get to 
$A=\bigl(\begin{smallmatrix}0&0\\0&0\end{smallmatrix}\bigr),\ B=\bigl(\begin{smallmatrix}1&0\\0&-1\end{smallmatrix}\bigr)$. The homotopy stays in $M^+$, and this last pair is h-congruent to 
\begin{equation}\label{eli2}
A=\begin{pmatrix}0&0\\0&0\end{pmatrix},\ B=\begin{pmatrix}1&0\\0&1\end{pmatrix} 
\end{equation}
by the group element $(1,\bigl(\begin{smallmatrix}1&0\\0&i\end{smallmatrix}\bigr).$
The case b) is reduced to the case a) after a linear homotopy only in the parameter $d$, bringing $d$ to $1$. In the case a), if both $a,b< 1$, the homotopy $A_t=A,\ B_t=(1-t)B$ stays in $M^+$ and brings the pair $(A,B)$ to (\ref{eli1}). If $a,d>1$, then $(A,B)$ is h-congruent to $A=\bigl(\begin{smallmatrix}1/a&0\\0&-1/d\end{smallmatrix}\bigr),B=\bigl(\begin{smallmatrix}1&0\\0&1\end{smallmatrix}\bigr)$ by the group element $\left(1,\bigl(\begin{smallmatrix}1/\sqrt{a}&0\\0&1/\sqrt{d}\end{smallmatrix}\bigr)\right)$. A further homotopy $A_t=(1-t)A,\ B_t=B$  brings the pair inside $M^+$ to (\ref{eli2}). 

We now assume $(A,B)$ is of type ii.) We have
\begin{equation}\label{det2}\begin{split} D&=\det\begin{pmatrix} 0&1&\bar a&\bar b\\\
                     \tau&0&\bar b&a\\
										 a&b&0&1\\
										 b&\bar a&\tau&1
				\end{pmatrix}=\\
				&=|b|^4-|b|^2\tau^2-|b|^2-2|a|^2|b|^2\cos2\beta+|a|^4-2|a|^2\tau\cos2\alpha+\tau^2,\end{split}\end{equation}
where $a=|a|e^{i\alpha}$ and $b=|b|e^{i\beta}$. 

\noindent If $D>0$, a linear homotopy in the pair $(\alpha,\beta)$, bringing $\alpha,\beta$ to either $\pm\pi/2$, deforms the matrix $B$ to $\bigl(\begin{smallmatrix}ia&ib\\ib&-ia\end{smallmatrix}\bigr),\ a,b\in \R$, and the path stays in $M^+$. The determinant (\ref{det2}) becomes
$$D=|b|^4+|b|^2(2|a|^2-\tau^2-1)+(|a|^2+\tau)^2.$$	
If we treat the above formula as a quadratic function in $|b|^2$, it will either have two positive zeros $x_1\le x_2$ (in the case $\tau^2+1\ge 2|a|^2\ge (1-\tau)^2/2$), or will be positive for all $b$. In the second case, we bring $b$ to $0$ by  a linear homotopy in the parameter $b$. In the first case, if $|b|^2< x_1$,  we also just bring the parameter $b$ to $0$. If $|b|^2>x_2$, the pair $(A,B)$ is h-congruent to the pair $A=\bigl(\begin{smallmatrix}0&1/|b|\\ \tau/|b|&0\end{smallmatrix}\bigr),\ B=\bigl(\begin{smallmatrix}ia/|b|&i\\i&-ia/|b|\end{smallmatrix}\bigr).$ We follow by the homotopy $A_t=\bigl(\begin{smallmatrix}0&(1-t)/|b|\\ (1-t)\tau/|b|&0\end{smallmatrix}\bigr)$, $B_t=\bigl(\begin{smallmatrix}(1-t)ia/|b|&i\\i&(t-1)ia/|b|\end{smallmatrix}\bigr)$ to get the pair $A=\bigl(\begin{smallmatrix}0&0\\0&0\end{smallmatrix}\bigr),$ $B=\bigl(\begin{smallmatrix}0&i\\i&0\end{smallmatrix}\bigr)$. The homotopy stays in $M^+$. This pair is h-equivalent to (\ref{eli2}) by the element $\left(-i, \frac{1}{\sqrt 2}\bigl(\begin{smallmatrix}1&i\\1&-i\end{smallmatrix}\bigr)\right)$.
		
\noindent If $D<0$, the linear homotopy in $\alpha$ and $\beta$ that either brings them to $0$ or $\pi$,  makes $B=\bigl(\begin{smallmatrix}a&b\\b&a\end{smallmatrix}\bigr),\ a,b\in \R$, stays in $M^-$, and makes the determinant (\ref{det2}) equal to
$$D=|b|^4-|b|^2(2|a|^2+\tau^2+1)+(|a|^2-\tau)^2.$$ 
The above equation, as a quadratic function in $|b|^2$, always has two positive zeros $x_1\le x_2$. Since we assume that $D<0$, the value $|b|^2$ is between $x_1$ and $x_2$. We do a linear homotopy in $b$ bringing $|b|^2$ to $\frac{x_1+x_2}{2}=|a|^2+\frac{\tau^2+1}{2}$.
The determinant $D$ becomes $(|a|^2-\tau)^2-\frac{1}{4}(2|a|^2+\tau^2+1)^2$, which is always negative. We follow by linear homotopies in parameters $a$ and $\tau$, bringing both of them to $0$. The induced homotopy $(A_t,B_t)$ stays in $M^-$ and brings $(A,B)$  
to the pair 
$A=\bigl(\begin{smallmatrix}0&1\\0&0\end{smallmatrix}\bigr)$, $B=\bigl(\begin{smallmatrix}0&1/\sqrt{2}\\ 1/\sqrt{2}&0\end{smallmatrix}\bigr).$  We follow by the homotopy $A_t=A,\ B_t=\bigl(\begin{smallmatrix}0&(1-t)/\sqrt{2}\\ t+(1-t)/\sqrt{2}&0\end{smallmatrix}\bigr)$, which stays in $M^-$, 
to get $A=\bigl(\begin{smallmatrix}0&1\\0&0\end{smallmatrix}\bigr)$, $B=\bigl(\begin{smallmatrix}0&0 \\ 1&0\end{smallmatrix}\bigr).$ At last we do the homotopy $A_t=\bigl(\begin{smallmatrix}t+ix&(1-t)\\0&0\end{smallmatrix}\bigr),\ B_t=\bigl(\begin{smallmatrix}0&0\\ 1-t&t\end{smallmatrix}\bigr),$ where $x(t)$ is some real function, compactly supported on $(0,1)$. The 
determinant $\det\bigl(\begin{smallmatrix}A_t&\bar B_t\\B_t&\bar A_t\end{smallmatrix}\bigr)$ equals $-((1-2t)^2+t^2x^2)$, so the homotopy stays completely in $M^-$ as long as $x(\frac{1}{2})\ne 0$. So we get that the original pair $(A,B)$ is homotopic  to (\ref{hyp1}) inside $M^-$.

The only thing we still need to show is that the two pairs (\ref{eli1}) and (\ref{eli2}) can also be joined by a homotopy. We can first see that (\ref{eli1}) can be deformed to 
\begin{equation}\label{eli3}
A=\begin{pmatrix}1&0\\0&1\end{pmatrix},\ B=\begin{pmatrix}0&0\\0&0\end{pmatrix} 
\end{equation}
 by the homotopy $A_t=\bigl(\begin{smallmatrix}1&0\\0&e^{i(1-t)\pi}\end{smallmatrix}\bigr),\ B_t=\bigl(\begin{smallmatrix}0&0\\0&0\end{smallmatrix}\bigr)$, which stays in $M^+$. The homotopy $A_t=\bigl(\begin{smallmatrix}1-t&e^{i\pi/4}x\\-e^{i\pi/4}x&1-t\end{smallmatrix}\bigr),$ $B_t=\bigl(\begin{smallmatrix}t&0\\0&t\end{smallmatrix}\bigr),$ where $x(t)$ is some small compactly supported smooth real function on $(0,1)$,  connects the pairs (\ref{eli3}) and (\ref{eli2}). If $x(\frac{1}{2})\ne 0$, the homotopy stays in $M^+$ since the determinant $\det\bigl(\begin{smallmatrix}A_t&\bar B_t\\B_t&\bar A_t\end{smallmatrix}\bigr)$ equals $(1-2 t)^2+|x|^2(|x|^2+2t^2)$. 

We have shown that all nondegenerate pairs $(A,B)$ can be connected to one of the two model types (\ref{hyp1}) or (\ref{eli2}), depending on the sign of the pair. So $M^\pm$ are both connected. The spaces $\cM^\pm$ are then also connected, since they are quotients of $M^\pm$.
\end{proof}

\begin{proof}[Proof of theorem \ref{thm1}] Perhaps after a small isotopic perturbation of $Y\hookrightarrow X$, we can assume that all complex points on $Y$ are either elliptic or hyperbolic. Let now $p$ be a complex point on $Y$. In local coordinates near $p$, the manifold $Y$ is given by an equation
$$w=\bar{z}^TA_0z+\Real (z^TB_0z)+o(|z|^2).$$ We can also assume that $o(|z|^2)=0$ since we can do a small isotopy $w=\bar{z}^TA_0z+\Real (z^TB_0z)+(1-t\phi(|z|))o(|z|^2)$, where $\phi\!:[0,\epsilon) \mapsto [0,1]$ has compact support and is $1$ near $0$. Let now $(A(t),B(t))$ be a smooth homotopy, $(A(t),B(t))=(A_1,B_1)$ near $t=0$ and $(A(t),B(t))=(A_0,B_0)$ near $t=1$. Using the proposition above,  we can assume that the homotopy stays in the same $M^\pm$ part as $(A_0,B_0)$. That means that 
\begin{equation}\label{nondeg} \det\begin{pmatrix}A(t)&\overline B(t)\\B(t)&\overline A(t)\end{pmatrix}\ne 0\,\quad \forall t\in\left[0,1\right]. \end{equation} Let $\varepsilon$ be small and let $\tilde Y$ coincide with $Y$ away from $|z|<\varepsilon$ and is given by
$$w=\bar{z}^TA(\sqrt[n]{|z|/\varepsilon})z+\Real (z^TB(\sqrt[n]{|z|/\varepsilon})z)$$
for $|z|\le\varepsilon$.  $Y$ and $\tilde Y$ are of course isotopic. The complex point of the manifold $\tilde Y$ at $0$ is modeled by the pair $(A_1,B_1)$. We need to show that we have not created any new complex points. A point $(z,w)$ on a manifold given by $w=f(z)$ is a complex point if and only if 
$$\frac{\partial f}{\partial \overline{z_1}}(z)=\frac{\partial f}{\partial \overline{z_2}}(z)=0.$$ 
In our case, using $B=B^T$, there are no new complex points created if and only if for $z\ne 0,\ |z|\le\varepsilon$
\begin{align*}
 &A(\sqrt[n]{|z|/\varepsilon})z+\bar B(\sqrt[n]{|z|/\varepsilon})\bar z+\\&+\sqrt[n]{\frac{|z|}{\varepsilon}}\frac{1}{2n|z|^2}\left({\overline {z}}^TA'(\sqrt[n]{|z|/\varepsilon})z +\Real ({z}^TB'(\sqrt[n]{|z|/\varepsilon})z)\right)z\ne 0
\end{align*}
We write $z=|z|z'$, where $|z'|=1$. The above equation is equivalent to
\begin{align*}
 &A(\sqrt[n]{|z|/\varepsilon})z'+\bar B(\sqrt[n]{|z|/\varepsilon})\overline {z'}+\\&+\sqrt[n]{|z|/\varepsilon}\frac{1}{2n}\left({\overline {z'}}^TA'(\sqrt[n]{|z|/\varepsilon})z'  +\Real ({z'}^TB'(\sqrt[n]{|z|/\varepsilon})z')\right)z'\ne 0,
\end{align*} 
 or using $s=\sqrt[n]{\frac{|z|}{\varepsilon}}$
 \begin{equation}\label{complex}
 A(s)z'+\bar B(s)\overline {z'}+\frac{s}{2n}\left({\bar z}^TA'(s)z+\Real \left({z'}^TB'(s)z'\right)\right)z'\ne 0,
\end{equation}
for all $|z_1|=1,s\in[0,1].$ If there exists a $z'$, $|z'|=1$, so that $A(s)z'+\bar B(s)\overline {z'}=0$ for some $s$, then the pair $(z',\overline{z'})$ is in the kernel of $\begin{pmatrix}A(t)&\overline B(t)\\B(t)&\overline A(t)\end{pmatrix}$. Since (\ref{nondeg}) holds, this cannot happen. Because of compactness, 
$||A(s)z'+\bar B(s)\overline {z'}||>\delta$ for some $\delta>0$, and all $s\in\left[0,1\right]$ and $z',\ |z'|=1$. Let $m=\max |{\bar z}^TA'(s)z+\Real \left({z'}^TB'(s)z'\right)|$. If we choose $n>m/(2\delta)$, the inequality (\ref{complex}) holds.  
 \end{proof}

\noindent The following proposition, together with Theorem \ref{thm1}, proves Theorem \ref{thm2}.

\begin{proposition} Let $Y$ be a smoothly embedded compact $4$-manifold in a complex $3$-manifold $X$, having only finitely many complex points that are all modeled by $w=\bar{z_1}^2+\bar{z_2}^2$ or $w=|z_1|^2+\bar{z_2}^2$. Then there exists a neighborhood $U$ of $Y$ and a smooth positive function $\phi$ with the properties  
\begin{itemize}
\item $\phi\ge 0$ and $\{\phi=0\}=Y$,
\item $\phi$ is strictly $2$-convex except at complex points, 
\item there exist positive constants $c,C$ so that $c (\dist(q,Y))^2\le\phi(q)\le C (\dist(q,Y))^2$ and $c \dist(q,Y)\le |\grad \phi(q)|\le C \dist(q,Y)$.  
\end{itemize} 
where $\dist(q,Y)$ is the distance of $q$ to $Y$ in some smooth metric on $X$.
\end{proposition}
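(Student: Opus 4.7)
The plan is to build $\phi$ by gluing explicit local models with a partition of unity---one model at each complex point and a squared-distance function on the CR regular part. Let $p_1,\ldots,p_k\in Y$ be the complex points; in local holomorphic coordinates near each $p_i$ the manifold $Y$ is either $w=\bar z_1^2+\bar z_2^2$ (elliptic) or $w=|z_1|^2+\bar z_2^2$ (hyperbolic). Choose pairwise disjoint neighborhoods $V_1,\ldots,V_k$ of the $p_i$ together with a neighborhood $V_0$ of $Y\setminus\bigcup_i V_i$ avoiding all complex points, and a subordinate partition of unity $\{\chi_j\}_{j=0}^k$ with $\chi_i\equiv 1$ near $p_i$. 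The local models are $\phi_0(q)=\dist(q,Y)^2$ on $V_0$ in a fixed Hermitian metric on $X$, $\phi_i=|w-\bar z_1^2-\bar z_2^2|^2$ at an elliptic $p_i$, and $\phi_i=(1+C_i|z|^2)\,|w-|z_1|^2-\bar z_2^2|^2$ at a hyperbolic $p_i$ for a positive constant $C_i$; finally set $\phi=\sum_{j=0}^k\chi_j\phi_j$.

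The first and third bullets reduce to a pointwise computation for each model: every $\phi_j$ is nonnegative, vanishes on $Y$ to exactly second order with nondegenerate quadratic part, so locally $\phi_j\asymp\dist^2$ and $|\grad\phi_j|\asymp\dist$, and these estimates are stable under nonnegative convex combinations. The second bullet is the content of the proof. Near a CR regular point one can arrange $Y=\{w=0,\,\mathrm{Im}\,z_2=0\}$ to first order, making the Levi form of $\phi_0$ equal to $\mathrm{diag}(0,\tfrac12,1)$ at the origin, with two positive eigenvalues. For an elliptic model, because $f=w-\bar z_1^2-\bar z_2^2$ is antiholomorphic in $z$ the cross derivatives $\di_{z_a}\dibar_{z_b}f$ vanish identically, and a direct calculation yields the Hermitian matrix
\[
\di\dibar|f|^2=\begin{pmatrix}4|z_1|^2 & 4z_1\bar z_2 & 0\\ 4\bar z_1 z_2 & 4|z_2|^2 & 0\\ 0 & 0 & 1\end{pmatrix},
\]
with eigenvalues $0,\,4|z|^2,\,1$, hence strictly $2$-convex on $V_i\setminus\{p_i\}$. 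For a hyperbolic model the mixed term $|z_1|^2$ makes $\di_{z_1}\dibar_{z_1}f=-1$, producing an extra $-2\Real(f)$ in the $(1,1)$-entry of $\di\dibar|f|^2$; this is negative on the positive real $w$-direction at $z=0$, so $|f|^2$ alone fails to be $2$-convex there. The multiplier $1+C_i|z|^2$ fixes this: an elementary computation shows that its contribution to $\di\dibar\phi_i$ at a point with $z_2=0$ includes the positive term $C_i|f|^2$ in the $(2,2)$-entry, while the $(3,3)$-entry remains $1$ and the $(2,3)$-entry vanishes, so the restriction of the Levi form to $\mathrm{span}(e_2,e_3)$ is positive definite off $Y$. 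The same behaviour persists in a full punctured neighborhood of $p_i$ because close to $Y$ the manufactured positivity dominates the cross terms of the Leibniz expansion; shrinking $V_i$ if necessary yields strict $2$-convexity of $\phi_i$ on $V_i\setminus\{p_i\}$.

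For the gluing, on $Y$ every term in the Leibniz expansion of $\di\dibar\phi$ that carries a factor $\phi_j$, $\di\phi_j$, or $\dibar\phi_j$ vanishes (because each $\phi_j$ vanishes to second order on $Y$), so $\di\dibar\phi|_Y=\sum_j\chi_j\,\di\dibar\phi_j|_Y$, a nonnegative convex combination of positive semidefinite matrices; the kernel of such a sum is the intersection of the individual kernels, so its rank is bounded below by the maximum individual rank, which is two on $Y\setminus\{p_1,\ldots,p_k\}$. By continuity this extends to a sufficiently thin tubular neighborhood $U$ of $Y$, and on the small neighborhoods near each $p_i$ (where $\chi_i\equiv 1$ and $\phi=\phi_i$) strict $2$-convexity on $V_i\setminus\{p_i\}$ is already secured by the local argument. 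Hence $\phi$ is strictly $2$-convex on $U\setminus\{p_1,\ldots,p_k\}$. The hard part is the hyperbolic case: one must verify that the manufactured $(2,2)$-positivity from the multiplier $1+C_i|z|^2$ really dominates all cross terms on a full punctured neighborhood of $p_i$ off $Y$---not merely along the $w$-axis or on $Y$ itself---and this is the technical core of the proof.
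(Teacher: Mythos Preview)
Your overall strategy---local models patched by a partition of unity, with the squared distance on the CR regular part---matches the paper's. The hyperbolic construction is in the same spirit (the paper uses the multiplier $1+|z_2|^2$ rather than $1+C|z|^2$, and carries out the minor computation you flag as the ``technical core'' by checking a $2\times 2$ minor of the Levi form). The real problem is your elliptic model.

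Your own computation shows that $\phi_i=|w-\bar z_1^2-\bar z_2^2|^2$ has Levi eigenvalues $0,\,4|z|^2,\,1$. On the punctured $w$--axis $\{z_1=z_2=0,\ w\neq 0\}\subset V_i\setminus\{p_i\}$ this gives $0,0,1$: only \emph{one} positive eigenvalue, so $\phi_i$ is \emph{not} strictly $2$-convex there. Since you arrange $\chi_i\equiv 1$ near $p_i$, the glued function $\phi$ equals $\phi_i$ on a neighbourhood of this axis and inherits the same failure; the partition of unity cannot rescue it. Thus the conclusion ``hence strictly $2$-convex on $V_i\setminus\{p_i\}$'' is false as stated, and the proof has a genuine gap.

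The paper repairs exactly this defect by inserting a multiplier in the elliptic case as well: it takes
\[
f(z,w)=(1+|z|^2)\,\bigl|w-\bar z_1^2-\bar z_2^2\bigr|^2,
\]
and checks via Sylvester's criterion that this is in fact strictly \emph{plurisubharmonic} on a punctured neighbourhood of the origin. At $z=0$ the multiplier contributes the term $|u|^2$ (with $u=w-\bar z_1^2-\bar z_2^2$) to each diagonal entry of the upper-left $2\times 2$ block of the Levi form, producing eigenvalues $|w|^2,|w|^2,1$ on the $w$--axis. So the missing idea is that a weight is needed at \emph{both} types of complex points, not only the hyperbolic one.
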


\begin{proof} Let $Y\hookrightarrow X$. Let $V$ be some small open set in $X$ containing all complex points. The result (\cite{Ch}, Proposition 6.5) provides such a function in a neighborhood of $Y\bs V$. So all we need is to construct suitable functions in small neighborhoods of complex points and then patch them together with the the function for $Y\bs V$ using the partition of unity. 
\noindent Let first $p$ be a complex point of the form $w=\bar{z_1}^2+\bar{z_2}^2$, and let 
$$f(z,w)=(1+|z|^2)|w-\bar{z_1}^2-\bar{z_2}^2|^2.$$ We show that $f$ is strictly plurisubharmonic in a small neighborhood of $(0,0)$, except at the origin. The Levi form of $f$ equals
$$Lf=\left(\begin{smallmatrix}4|z_1|^2(1+|z|^2)+|u|^2-2(z_1^2u+\bar{z_1}^2\bar u) & 4z_1\bar{z_2}(1+|z|^2)-2(z_1z_2u+\bar{z_1}\bar{z_2}\bar u) & \bar{z_1}u\\
 4\bar{z_1}z_2(1+|z|^2)-2(z_1z_2u+\bar{z_1}\bar{z_2}\bar u)&4|z_2|^2(1+|z|^2)+|u|^2-2(z_2^2u+\bar{z_2}^2\bar u)&\bar{z_2}u\\
 z_1\bar{u}&z_2\bar{u}&1+|z|^2\end{smallmatrix}\right),$$ 
where $u=w-\bar{z_1}^2-\bar{z_2}^2$. Using Sylvester's criterion (and a long calculation), one can check that the matrix $Lf$ is positive definite in a neighborhood of the origin by calculating determinants of the minors. The determinant only vanishes at the origin. 

\noindent In the case of $p$ being of type $w=|z_1|^2+\bar{z_2}^2$, we define 
 $$f(z,w)=(1+|z_2|^2)|w-|z_1|^2-\bar{z_2}^2|^2$$ and we get the Levi form
 $$Lf=\left(\begin{smallmatrix}2|z_1|^2-(u+\bar u)(1+|z_2|^2)& 2\bar{z_1}\bar{z_2}(1+|z_2|^2)-\bar{z_1}z_2(u+\bar u) & -\bar{z_1}(1+|z_2|^2)\\
 2z_1z_2(1+|z_2|^2)-z_1\bar{z_2}(u+\bar u)& |u|^2-2(z_2^2u+\bar{z_2}^2\bar u)+4|z_2|^2(1+|z_2|^2)&\bar{z_2}u\\
 -z_1(1+|z_2|^2)& z_2\bar{u}& 1+|z_2|^2\end{smallmatrix}\right),$$
 where $u=w-|z_1|^2-\bar{z_2}^2$. 
One quickly sees that the trace of $Lf$ is strictly positive in a neighborhood of the origin, so $Lf$ has at least one strictly positive eigenvalue. The determinant of the minor 
$$\begin{pmatrix}
  |u|^2-2(z_2^2u+\bar{z_2}^2\bar u)+4|z_2|^2(1+|z_2|^2)&\bar{z_2}u\\
 z_2\bar{u}& 1+|z_2|^2\end{pmatrix}$$
equals 
\begin{align*}|u|^2-&2(z_2^2u+\bar{z_2}^2\bar u)(1+|z_2|^2)+4|z_2|^2(1+|z_2|^2)^2=(|u|-2|z_2|^2(1+|z_2|^2))^2 \\                &+2(1+|z_2|^2)(2|u||z_2|^2-z_2^2u-\bar{z_2}^2\bar u)+4|z_2|^2(1+|z_2|^2)^2(1-|z_2|^2).
 \end{align*}
This is clearly strictly positive, except for the points $\{z_2=u=0\}.$ It is an easy check that for $\{z_2=u=0\}$ the Levi form $Lf$  has two strictly positive eigenvalues, except at the origin. For other points in a neighborhood of the origin, the Levi form is strictly positive if the determinant of $Lf$ is strictly positive, or has exactly one non-positive eigenvalue if the determinant is negative. 
\end{proof}

\bibliographystyle{amsplain}

\end{document}